\begin{document}

\title{Newton-based methods for finding the positive ground state of Gross-Pitaevskii equations\thanks{\textbf{Funding. }This work was funded by the National Natural Science Foundation of China (Grant No. 11671217, No. 12071234) and the Tianjin Graduate Research and Innovation Project (No. 2019YJSB040).}
}

\titlerunning{Newton-based methods for finding the positive ground state of GPE}        

\author{Pengfei Huang         \and
        Qingzhi Yang 
}


\institute{Pengfei Huang \at
              School of Mathematical
Sciences and LPMC, Nankai University, Tianjin 300071, P.R. China. \\
              \email{huangpf@mail.nankai.edu.cn}. ORCID: 0000-0003-3097-1804.           
           \and
           Corresponding author. Qingzhi Yang \at
              School of Mathematical
Sciences and LPMC, Nankai University, Tianjin 300071, P.R. China. \\
              \email{qz-yang@nankai.edu.cn}
}

\date{Received: date / Accepted: date}

\maketitle

\begin{abstract}
The discretization of Gross-Pitaevskii equations (GPE) leads to a nonlinear eigenvalue problem with eigenvector nonlinearity (NEPv). In this paper, we use two Newton-based methods to compute the positive ground state of GPE. The first method comes from the Newton-Noda iteration for saturable nonlinear Schr\"odinger equations proposed by Liu, which can be transferred to GPE naturally. The second method combines the idea of the Bisection method and the idea of Newton method, in which, each subproblem involving block tridiagonal linear systems can be solved easily. We give an explicit convergence and computational complexity analysis for it. Numerical experiments are provided to support the theoretical results.
\keywords{Gross-Pitaevskii equations \and Nonlinear eigenvalue \and Ground state \and Newton method \and Bisection method}
\subclass{65H17 \and 49M15 \and 65N25}
\end{abstract}

\section{Introduction}
The Gross-Pitaevskii equation (GPE) is a nonlinear Schr\"odinger equation that describes the properties of condensate at zero or very low temperature. Such an equation for the non-rotating Bose-Einstein condensation (BEC) can be expressed as \cite{bao2013mathematical,pethick2008bose}
\begin{equation}\label{equ:GPE}
i\frac{\partial\phi(x,t)}{\partial t}=(-\Delta+V(x)+\beta|\phi(x,t)|^2)\phi(x,t),\quad x\in\mathbb{R}^d,~t>0,
\end{equation}
where $\phi(x,t)$ is a wave function, $d=1,2,3$, $\beta\in\mathbb{R}$ and $V(x)$ is a real-valued external trapping potential. BEC is of interest in many applications \cite{pethick2008bose,fetter2009rotating}, and one of the major topics is finding the ground state of \eqref{equ:GPE}. The ground state of equation \eqref{equ:GPE} is defined as the minimizer of the energy functional, which can be expressed as the following nonconvex minimization problem \cite{bao2013mathematical,bao2004computing,bao2003ground}:
\begin{equation}\label{equ:groundstate}
u_g=\arg\min_{u\in S} E(u),
\end{equation}
where
\begin{equation*}\label{equ:energy}
E(u)=\int_{\mathbb{R}^d}[|\nabla u(x)|^2+V(x)|u(x)|^2+\frac{\beta}{2}|u(x)|^4]dx,
\end{equation*}
and the spherical constraint $S$ is defined as
\begin{equation*}\label{equ:spherical}
S=\{u|E(u)<\infty,\int_{\mathbb{R}^d}|u(x)|^2dx=1\}.
\end{equation*}
The Euler-Lagrange equation (or first-order optimality condition) of \eqref{equ:groundstate} is the nonlinear eigenvalue problem as follows:
\begin{equation}\label{equ:NEPV1}
-\Delta u+V(x)u+\beta|u|^2u=\lambda u,~\int_{\mathbb{R}^d}|u(x)|^2dx=1,
\end{equation}
where $(\lambda,u)$ is the eigenpair. In this paper, we consider the finite-difference discretization of the nonlinear eigenvalue problem \eqref{equ:NEPV1} with $\beta>0$. This leads to a nonlinear eigenvalue problem with eigenvector nonlinearity (NEPv)
\begin{equation}\label{equ:NEPv}
\beta \mathrm{diag}(u^{[2]})u+Bu=\lambda u,~u^Tu=1,
\end{equation}
where $B\in\mathbb{R}^{n\times n}$ is the sum of the discretization matrix of the negative Laplace operator and the external trapping potential. Here we still use $u=[u_1,u_2,\cdots,u_n]^T$ to denote the discretization of $u(x)$ without ambiguity, $u^{[2]}=[u_1^2,u_2^2,\cdots,u_n^2]^T$, and $\mathrm{diag}(u)$ represents a diagonal matrix with the diagonal given by the vector $u$. It is well known that, without rotation, the ground state can be seen as a real non-negative function in physics \cite{bao2013mathematical}. Furthermore, the positiveness of the eigenvector of \eqref{equ:NEPv}, corresponding to the discretization of the ground state, has been proved from different aspects, see \cite{cances2010numerical,choi2001generalization,huang2020finding}. Therefore, we aim to provide methods for finding the positive solution of \eqref{equ:NEPv} and analysis convergence and computational complexity.

Different methods have been proposed in computing the ground state of BEC for nonlinear eigenvalue problems \eqref{equ:NEPV1} and the minimization problem \eqref{equ:groundstate}, such as the self-consistent field iteration (SCF) \cite{cai2018eigenvector}, full multigrid method \cite{jia2016full}, normalized gradient method \cite{bao2004computing}, regularized Newton method \cite{wu2017regularized,hu2018adaptive} and semidefinite programming relaxation method \cite{hu2016note,yang2019numerical}. To preserve the positivity of the solution of \eqref{equ:NEPv}, we refer to two different ideas based on the Newton method for nonlinear equations. Recently, Liu \cite{liu2020positivity} proposed the Newton-Noda iteration (NNI) to compute the positive solution of the saturable nonlinear Schr\"odinger equations, combining the idea of Newton method with the idea of the Noda iteration \cite{noda1971note} and proved its convergence. It is easy to find that the NNI is also effective for our problem and enjoys locally quadratic convergence. Since the cost of NNI is dominated by inner linear systems, while given $u$, $\beta \mathrm{diag}(u^{[2]})+B$ in \eqref{equ:NEPv} itself has nice structure as block tridiagonal positive semidefinite $M$-matrix, we want to solve \eqref{equ:NEPv} without the spherical constraint in the iteration. For a given proper $\lambda>0$, Choi et al. \cite{choi2001generalization,choi2002global} used the Newton iteration to find the positive eigenvector of the unconstrained NEPv,
\begin{equation*}
\beta \mathrm{diag}(u^{[2]})u+Bu=\lambda u,
\end{equation*}
and proved the global monotone convergence of the Newton iteration. They also asserted that the positive eigenvector is differentiable about the eigenvalue $\lambda$. This motivated us to propose a simple algorithm combining the idea of Newton method with the idea of the Bisection method to obtain an acceptable approximate solution of \eqref{equ:NEPv}. We present the convergence and computational complexity analysis for our algorithm.

The rest of this paper is organized as follows. We begin with some preliminaries in section \ref{sec:pre}. In section \ref{sec:NNI}, we present the Newton-Noda iteration and its convergence theories for NEPv \eqref{equ:NEPv}. In section \ref{sec:NBI}, we propose the Newton-Bisection iteration and establish the convergence of the algorithm and give its computational complexity. In section \ref{sec:numerical}, we provide numerical examples of Newton-Noda and Newton-Bisection iteration. Concluding remarks are given in the last section.
\section{Preliminaries}\label{sec:pre}
Throughout this paper, we adopt the standard linear algebra notations. $\|\cdot\|$ denotes the 2-norm of vectors and matrices and $\|u\|_1$ is the 1-norm for the vector $u$. In addition, for a vector $u=[u_1,u_2,\cdots,u_n]^T$,
\[\min(u)=\min\limits_iu_i,\quad \max(u)=\max\limits_iu_i.\]
For $u_1,~u_2\in\mathbb{R}^n$, $u_1>(\ge)u_2$ denotes that $(u_1)_i>(\ge)(u_2)_i$, $i=1,2,\cdots, n$.
$\lambda_{max}(B)$  and $\lambda_{min}(B)$ denote the maximum and the minimum eigenvalues of the matrix $B$, respectively.
\begin{definition}[$M$-matrix \cite{varga1962iterative}]
A matrix $B\in\mathbb{R}^{n\times n}$ is called an $M$-matrix, if $B=sI-A$, where $A$ is nonnegative, and $s\ge\rho(A)$. Here $\rho(A)$ is the spectral radius of $A$.
\end{definition}
\begin{definition}[Irreducibility/Reducibility \cite{varga1962iterative}]
A matrix $B\in \mathbb{R}^{n\times n}$ is called reducible, if there exists a nonempty proper index subset $I\subset[n]$, such that
$$b_{ij}=0,\quad \forall i\in I,\quad\forall j\notin I.$$
If $B$ is not reducible, then we call $B$ irreducible.
\end{definition}

Here, we consider NEPv \eqref{equ:NEPv} generated by the finite difference discretization of \eqref{equ:NEPV1}, thus $B$ is an irreducible nonsingular $M$-matrix.
Let us define several notations that will be used throughout this paper, which is similar to \cite{liu2020positivity}:
\[\mathcal{A}(u)=\beta \text{diag} (u^{[2]})+B,\quad r(u,\lambda)=\mathcal{A}(u)u-\lambda u.\]
Then NEPv \eqref{equ:NEPv} can be written as
\[r(u,\lambda)=\mathcal{A}(u)u-\lambda u=0,~u^Tu=1,\]
and $J(u,\lambda)=\nabla_ur(u,\lambda)=\mathcal{A}(u)-\lambda I+2\beta\mathrm{diag}(u^{[2]})$.
\section{The Newton-Noda Iteration for NEPv and its inexact version}\label{sec:NNI}
In this section, we present the Newton-Noda iteration (NNI) for computing the positive solution of NEPv \eqref{equ:NEPv} in the way analogous to Liu \cite{liu2020positivity}. Define
\[F(u,\lambda)=
\begin{bmatrix}
r(u,\lambda)\\
\frac{1}{2}(1-u^Tu)
\end{bmatrix},
\]
then
\[F'(u,\lambda)=
\begin{bmatrix}
J(u,\lambda) & -u\\
-u^T&0
\end{bmatrix}.
\]
The NNI is showed in Algorithm \ref{alg:NNI}.
\begin{algorithm}[htbp]
\caption{Newton-Noda iteration (NNI) for NEPv} \label{alg:NNI}
\begin{algorithmic}[1]
\State Given a feasible initial point $u_0>0$ with $\|u_0\|=1$, $\lambda_0=\text{min}(\frac{\mathcal{A}(u_0)u_0}{u_0})$.
    \For{$k=0,1,2,\cdots$}
        \State Solve the linear system $F'(u_k,\lambda_k)[\begin{smallmatrix}\Delta_k\\ \delta_k\end{smallmatrix}]=-F(u_k,\lambda_k)$.
        \State Let $\theta_k=1$.
        \State Compute $w_{k+1}=u_k+\theta_k\Delta_k$.
        \State Normalize the vector $\hat{u}_{k+1}=\frac{w_{k+1}}{\|w_{k+1}\|}$.
        \State Compute $h_{k}(\theta_k)=\mathcal{A}(\hat{u}_{k+1})\hat{u}_{k+1}-\lambda_k\hat{u}_{k+1}$.
        \While{$h_k(\theta_k)\ngtr 0$}
            \State $\theta_k=\frac{\theta_k}{2}$, go back to step 5.
        \EndWhile
        \State $u_{k+1}=\hat{u}_{k+1}$, compute $\lambda_{k+1}=\text{min}(\frac{\mathcal{A}(u_{k+1})u_{k+1}}{u_{k+1}})$.
    \EndFor
\end{algorithmic}
\end{algorithm}

Suppose that $\{u_k,\lambda_k\}$ is generated by Algorithm \ref{alg:NNI}. We can obtain the following convergence results as an extension of theories of Liu \cite{liu2020positivity}, which we will not repeat the proof in here.
\begin{theorem}
The sequence $\{\lambda_k\}$ is strictly increasing and bounded above. The limit point of $\{u_k\}$ is a positive eigenvector of NEPv \eqref{equ:NEPv}.
\end{theorem}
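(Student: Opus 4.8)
The plan is to follow the template of Liu's analysis for the Newton-Noda iteration applied to saturable nonlinear Schr\"odinger equations, checking that every structural ingredient used there persists for the GPE operator $\mathcal{A}(u)=\beta\,\mathrm{diag}(u^{[2]})+B$. The key observation is that, given any $u>0$, the matrix $\mathcal{A}(u)$ is an irreducible nonsingular $M$-matrix (the diagonal perturbation $\beta\,\mathrm{diag}(u^{[2]})$ with $\beta>0$ only increases the diagonal and does not destroy the off-diagonal sign pattern or irreducibility of $B$), so $\mathcal{A}(u)^{-1}>0$ entrywise and Perron-Frobenius/Noda-type arguments apply. I would first record the monotonicity step: at each iteration $\lambda_k=\min\big(\mathcal{A}(u_k)u_k/u_k\big)$ together with the inner \textbf{while}-loop guaranteeing $h_k(\theta_k)=\mathcal{A}(\hat u_{k+1})\hat u_{k+1}-\lambda_k\hat u_{k+1}>0$ forces, by the Noda iteration inequality $\min\big(\mathcal{A}(u)u/u\big)\le \lambda_{PF}\le \max\big(\mathcal{A}(u)u/u\big)$-style estimate, that $\lambda_{k+1}=\min\big(\mathcal{A}(u_{k+1})u_{k+1}/u_{k+1}\big)>\lambda_k$. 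This yields strict monotonic increase of $\{\lambda_k\}$.

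Next I would establish the upper bound. Since each $u_k$ lies on the unit sphere and is positive, and since $\lambda_k$ is sandwiched between $\lambda_{\min}(B)$ and a quantity controlled by $\max(u_k^{[2]})$, one shows $\lambda_k\le \lambda_{\max}(B)+\beta$ (using $\|u_k\|=1$ so $\max(u_k^{[2]})\le 1$, hence $\mathcal{A}(u_k)\preceq B+\beta I$ and $\lambda_k=\min_i\big(\mathcal{A}(u_k)u_k\big)_i/(u_k)_i \le u_k^T\mathcal{A}(u_k)u_k \le \lambda_{\max}(B)+\beta$, the middle inequality because $\lambda_k$ is a minimum of the ratios whose weighted average with weights $(u_k)_i^2$ is the Rayleigh quotient). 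A bounded strictly increasing real sequence converges; call the limit $\lambda_*$. Then I would argue that the iterates $\{u_k\}$ remain in a compact subset of the open positive orthant intersected with the sphere: the key point is that the sign condition $h_k(\theta_k)>0$ keeps $u_{k+1}>0$ with a uniform lower bound on $\min(u_{k+1})$, which follows from $u_{k+1}=\mathcal{A}(u_{k+1})^{-1}(h_k+\lambda_k u_{k+1})$-type identities together with the positivity of $\mathcal{A}(u_{k+1})^{-1}$ and the boundedness of $\lambda_k$; this prevents the limit from collapsing to the boundary. Extracting a convergent subsequence $u_{k_j}\to u_*>0$ with $\|u_*\|=1$, continuity of $r(\cdot,\cdot)$ and the fact that the Newton step sizes $\theta_k$ and residuals $F(u_k,\lambda_k)$ must tend to zero (because $\{\lambda_k\}$ Cauchy forces the increments to vanish, and the descent-type control on $\|r(u_k,\lambda_k)\|$ from the algorithm forces it to $0$) give $r(u_*,\lambda_*)=0$, i.e.\ $u_*$ is a positive eigenvector of \eqref{equ:NEPv}.

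The main obstacle I anticipate is the uniform positivity/compactness argument: showing that $\liminf_k \min(u_k)>0$ so that the limit point does not degenerate on the boundary of the positive cone, and simultaneously showing that the inner \textbf{while}-loop terminates in finitely many halvings with $\theta_k$ bounded away from $0$ near the solution (equivalently, that $h_k(\theta_k)>0$ is achievable). Both rely on quantitative $M$-matrix estimates—specifically uniform bounds on $\|\mathcal{A}(u)^{-1}\|$ and on the entries of $\mathcal{A}(u)^{-1}$ over the relevant compact set of $u$—and on the structure of the Newton correction; this is exactly where the proof in \cite{liu2020positivity} does the real work, and one must verify the saturable-nonlinearity-specific bounds there are replaced by the (in fact simpler) bounds available for the cubic term $\beta\,\mathrm{diag}(u^{[2]})$. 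Everything else is a direct transcription: the Noda sandwiching inequality, the well-posedness of the Newton linear system $F'(u_k,\lambda_k)$ (nonsingularity of $\left[\begin{smallmatrix}J(u_k,\lambda_k)&-u_k\\-u_k^T&0\end{smallmatrix}\right]$, which follows since $J(u_k,\lambda_k)=\mathcal{A}(u_k)+2\beta\,\mathrm{diag}(u_k^{[2]})-\lambda_k I$ is positive definite on $u_k^\perp$ once $\lambda_k<\lambda_*$), and the continuity passage to the limit. Hence, as the excerpt states, the proof is omitted and only the adaptation of these ingredients needs to be noted.
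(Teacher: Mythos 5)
Your proposal is correct and follows essentially the same route as the paper, which itself gives no proof but states the result as a direct extension of Liu's Newton--Noda analysis: the strict increase of $\lambda_k$ from the positivity safeguard $h_k(\theta_k)>0$, the upper bound via the Rayleigh-quotient estimate on the unit sphere, and the compactness/limit-passage argument are exactly the ingredients being transferred. You also correctly identify (and honestly defer to \cite{liu2020positivity} for) the genuinely technical steps—the uniform lower bound on $\min(u_k)$ and the finite termination of the step-size halving—which is consistent with the paper's own treatment.
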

\begin{theorem}
Suppose that $\{u_0,\lambda_0\}$ is sufficiently close to an positive eigenpair $(u_*,\lambda_*)$ of NEPv \eqref{equ:NEPv}. Then $\lambda_k$ and $u_k$ converge quadratically to $\lambda_*$ and $u_*$, respectively.
\end{theorem}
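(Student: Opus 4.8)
The plan is to follow Liu's local analysis for the saturable case \cite{liu2020positivity} and to verify that the structural ingredients it relies on persist for the GPE nonlinearity $\beta|u|^2u$ with $\beta>0$. By the preceding (global) theorem we may assume $u_k\to u_*>0$ and $\lambda_k\uparrow\lambda_*$, so that $(u_k,\lambda_k)$ eventually lies in an arbitrarily small neighbourhood of $(u_*,\lambda_*)$; set $d_k=\|u_k-u_*\|$. The argument has three parts: (i) show that near $(u_*,\lambda_*)$ the inner \texttt{while}-loop is inactive, i.e. $\theta_k=1$ always passes the test, so that one NNI iteration reduces to a single Newton step for $F$ followed by renormalization and by the recomputation $\lambda_{k+1}=\min(\mathcal{A}(u_{k+1})u_{k+1}/u_{k+1})$; (ii) show that $F'(u_*,\lambda_*)$ is nonsingular, so that the Newton part converges quadratically; (iii) check that renormalization and the $\lambda$-recomputation are only higher-order corrections.

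For (ii): since $B$ is an irreducible $M$-matrix and $\beta\,\mathrm{diag}(u_*^{[2]})\ge0$ is diagonal, $\mathcal{A}(u_*)$ is an irreducible symmetric positive definite matrix with nonpositive off-diagonal entries (a Stieltjes matrix); writing $\mathcal{A}(u_*)=cI-P$ with $P\ge0$ irreducible, Perron--Frobenius forces its positive eigenvector $u_*$ to belong to the \emph{simple} smallest eigenvalue, i.e. $\lambda_*=\lambda_{\min}(\mathcal{A}(u_*))$ and $\mathcal{A}(u_*)-\lambda_*I\succeq0$ with one-dimensional kernel $\mathrm{span}(u_*)$. Because $u_*>0$, the diagonal matrix $2\beta\,\mathrm{diag}(u_*^{[2]})$ is positive definite, so $J(u_*,\lambda_*)=(\mathcal{A}(u_*)-\lambda_*I)+2\beta\,\mathrm{diag}(u_*^{[2]})$ is positive definite with nonpositive off-diagonals, hence a nonsingular $M$-matrix, whence $J(u_*,\lambda_*)^{-1}>0$. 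Taking the Schur complement of the $J$-block of $F'(u_*,\lambda_*)$, nonsingularity is equivalent to $u_*^{T}J(u_*,\lambda_*)^{-1}u_*\neq0$, which holds since $J(u_*,\lambda_*)^{-1}\succ0$ and $u_*\neq0$. As $F$ is polynomial, standard local Newton theory then gives quadratic convergence of the pure Newton iteration for $F$ in that neighbourhood.

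For (i), the step I expect to be the main obstacle: write $h_k(1)=\bigl(\mathcal{A}(\hat u_{k+1})\hat u_{k+1}-\lambda_*\hat u_{k+1}\bigr)+(\lambda_*-\lambda_k)\hat u_{k+1}$. Using that $g(u):=\mathcal{A}(u)u$ satisfies $g(u_*)=\lambda_*u_*$ and $g'(u_*)-\lambda_*I=J(u_*,\lambda_*)$, the first bracket is $O(\|\hat u_{k+1}-u_*\|)$, and the Newton step gives $\|w_{k+1}-u_*\|=O(d_k^2)$ while renormalization perturbs $w_{k+1}$ only by $O(\|\Delta_k\|^2)=O(d_k^2)$ (the second Newton equation makes $\Delta_k\perp u_k$ since $\|u_k\|=1$), so this bracket is $O(d_k^2)$ componentwise. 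For the second term, expanding $\lambda_k=\min_i(\mathcal{A}(u_k)u_k)_i/(u_k)_i$ to first order about $u_*$, and using that $u_k-u_*$ is orthogonal to $u_*$ up to $O(d_k^2)$, gives $\lambda_*-\lambda_k=-\min_i[J(u_*,\lambda_*)(u_k-u_*)]_i/(u_*)_i+O(d_k^2)$. Now $v\mapsto\min_i[J(u_*,\lambda_*)v]_i/(u_*)_i$ is \emph{strictly negative} on unit vectors $v\perp u_*$: if $J(u_*,\lambda_*)v\ge0$ then $v=J(u_*,\lambda_*)^{-1}\bigl(J(u_*,\lambda_*)v\bigr)$ is either $>0$ or $0$ (since $J(u_*,\lambda_*)^{-1}>0$), contradicting $v\perp u_*>0$, $v\neq0$; by compactness this minimum is $\le-c_0<0$ on (a sphere neighbourhood of) $u_*^{\perp}$. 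Hence $\lambda_*-\lambda_k\ge\tfrac{c_0}{2}d_k$ for $d_k$ small, so each component of $h_k(1)$ is $\ge\tfrac{c_0}{4}\min(u_*)\,d_k-O(d_k^2)>0$, and the line search accepts $\theta_k=1$ without halving.

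Finally, for (iii) and the conclusion: in this neighbourhood one NNI step is the Newton step $(w_{k+1},\lambda_k+\delta_k)=(u_k,\lambda_k)+(\Delta_k,\delta_k)$, then $\hat u_{k+1}=w_{k+1}/\|w_{k+1}\|$, then $\lambda_{k+1}=\min(\mathcal{A}(u_{k+1})u_{k+1}/u_{k+1})$. Nonsingularity of $F'(u_*,\lambda_*)$ and smoothness of $F$ give $\|w_{k+1}-u_*\|+|\lambda_k+\delta_k-\lambda_*|=O(d_k^2)$; renormalization moves $w_{k+1}$ by $O(d_k^2)$, so $\|u_{k+1}-u_*\|=O(d_k^2)$; and the same first-order bound for $u\mapsto\min(\mathcal{A}(u)u/u)$ gives $|\lambda_{k+1}-\lambda_*|\le C\|u_{k+1}-u_*\|=O(d_k^2)$. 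Since $|\lambda_k-\lambda_*|$ is itself of order $d_k$, both $\|u_{k+1}-u_*\|$ and $|\lambda_{k+1}-\lambda_*|$ are $O\bigl(\max(d_k,|\lambda_k-\lambda_*|)^2\bigr)$, i.e. $u_k$ and $\lambda_k$ converge quadratically. The crux is the positivity estimate $\lambda_*-\lambda_k\gtrsim\|u_k-u_*\|$ in part (i): it is exactly what the $M$-matrix property $J(u_*,\lambda_*)^{-1}>0$ buys, and guaranteeing it is where the finite-difference structure of $B$ is essential; the remainder is routine perturbation bookkeeping.
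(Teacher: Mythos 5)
Your proposal is correct, and it takes essentially the route the paper relies on: the paper itself gives no proof of this theorem, deferring entirely to Liu's analysis of the Newton--Noda iteration, and your three-part argument (full step $\theta_k=1$ eventually accepted, nonsingularity of $F'(u_*,\lambda_*)$ via the Schur complement and the irreducible $M$-matrix property of $J(u_*,\lambda_*)$, normalization and $\lambda$-recomputation as $O(d_k^2)$ corrections) reconstructs exactly the ingredients that transfer from the saturable case to the GPE nonlinearity. The one step worth stating explicitly in a final write-up is the near-orthogonality $u_*^T(u_k-u_*)=-\tfrac12\|u_k-u_*\|^2$, which you use both for the line-search estimate and for $\Delta_k\perp u_k$; it follows immediately from $\|u_k\|=\|u_*\|=1$ and closes the only implicit gap in your part (i).
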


It is obvious that the main step in Algorithm \ref{alg:NNI} is to solve the linear system of step 3. In our numerical experiments, we only solve it inexactly by iterative method for efficiency in large scale, although we cannot verify the convergence of the inexact version theoretically.

\begin{algorithm}
\caption{Inexact Newton-Noda iteration (NNI) for NEPv} \label{alg:inexactNNI}
\begin{algorithmic}[1]
\State Given a feasible initial point $u_0>0$ with $\|u_0\|=1$, $\lambda_0=\text{min}(\frac{\mathcal{A}(u_0)u_0}{u_0})$. $c>0$.
    \For{$k=0,1,2,\cdots$}
        \State Solve the linear system inexactly, such that $\|F'(u_k,\lambda_k)[\begin{smallmatrix}\Delta_k\\ \delta_k\end{smallmatrix}]+F(u_k,\lambda_k)\|<c.$
        \State Let $\theta_k=1$.
        \State Compute $w_{k+1}=u_k+\theta_k\Delta_k$.
        \State Normalize the vector $\hat{u}_{k+1}=\frac{w_{k+1}}{\|w_{k+1}\|}$.
        \State Compute $h_{k}(\theta_k)=\mathcal{A}(\hat{u}_{k+1})\hat{u}_{k+1}-\lambda_k\hat{u}_{k+1}$.
        \While{$h_k(\theta_k)\ngtr 0$}
            \State $\theta_k=\frac{\theta_k}{2}$, go back to step 5.
        \EndWhile
        \State $u_{k+1}=\hat{u}_{k+1}$, compute $\lambda_{k+1}=\text{min}(\frac{\mathcal{A}(u_{k+1})u_{k+1}}{u_{k+1}})$.
    \EndFor
\end{algorithmic}
\end{algorithm}

In the next section, we propose a method to solve the linear system only involves $J(u,\lambda)$, which has nice structure and there are mature methods to compute it directly or iteratively, so as to reduce the workload of the algorithm.
\section{The Newton-Bisection Iteration}\label{sec:NBI}
In \cite{choi2001generalization,choi2002global} Choi et al. used the Newton iteration to solve the following unconstrained nonlinear eigenvalue problem:
\begin{equation}\label{equ:choi}
\beta \text{diag}(u^{[2]})u+Bu=\lambda u,
\end{equation}
where $\lambda$ is any fixed positive constant such that $\lambda>\lambda_{min}(B)$. The Newton iteration for solving \eqref{equ:choi} is:
\begin{equation}\label{equ:iter}
J(u_k,\lambda)u_{k+1}=2\beta \text{diag}(u_k^{[2]})u_k,~k=1,2,\cdots.
\end{equation}
\begin{lemma}\cite[Theorem1, Theorem 2]{choi2002global}\label{lem:continuous}
For any $\lambda>\mu$, where $\mu=\lambda_{min}(B)$, \eqref{equ:choi} has a unique positive solution. Let $u(\lambda)$ denote the unique positive eigenvector corresponding to $\lambda\in(\mu,\infty)$. Then:
\begin{enumerate}[(1)]
\item $u(\lambda_1)<u(\lambda_2)$ if $\mu<\lambda_1<\lambda_2<\infty$;
\item $u(\lambda)$ is continuous on $(\mu,\infty)$.
\end{enumerate}
\end{lemma}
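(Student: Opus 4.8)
The plan is to establish, in order, (i)~existence, (ii)~uniqueness, (iii)~the strict monotonicity~(1), and (iv)~the continuity~(2), each time exploiting the sign structure of the irreducible nonsingular $M$-matrix $B$: in particular that $B$ is symmetric positive definite, that adding a nonnegative diagonal to $B$ again produces an irreducible nonsingular $M$-matrix, and that the inverse of such a matrix is entrywise positive. Throughout write $u^{[3]}$ for the componentwise cube, so that \eqref{equ:choi} reads $Bu+\beta u^{[3]}=\lambda u$.

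\emph{Existence.} I would minimize $G_\lambda(u)=\tfrac12 u^{T}Bu-\tfrac{\lambda}{2}u^{T}u+\tfrac{\beta}{4}\sum_i u_i^4$, whose gradient is $Bu-\lambda u+\beta u^{[3]}$, so critical points of $G_\lambda$ are exactly the solutions of \eqref{equ:choi}. Since $\beta>0$ the quartic term makes $G_\lambda$ coercive and bounded below, so a global minimizer $u^{*}$ exists; moving from $0$ along a small positive multiple of a $\lambda_{min}(B)$-eigenvector of $B$ makes $G_\lambda$ negative (this is where $\lambda>\mu$ enters), so $u^{*}\neq 0$. Because the off-diagonal entries of $B$ are nonpositive, $|u^{*}|^{T}B|u^{*}|\le (u^{*})^{T}Bu^{*}$ while the other two terms of $G_\lambda$ are unchanged under $u^{*}\mapsto |u^{*}|$, so $|u^{*}|$ is also a minimizer and we may take $u^{*}\ge 0$. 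From $\nabla G_\lambda(u^{*})=0$ we get $(B+\beta\,\mathrm{diag}((u^{*})^{[2]}))u^{*}=\lambda u^{*}$, and since $B+\beta\,\mathrm{diag}((u^{*})^{[2]})$ is an irreducible nonsingular $M$-matrix, $u^{*}=\lambda\,(B+\beta\,\mathrm{diag}((u^{*})^{[2]}))^{-1}u^{*}>0$ entrywise.

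\emph{A comparison inequality --- the main obstacle.} The heart of the argument is the following claim, from which uniqueness and monotonicity both follow: if $u,v>0$ solve \eqref{equ:choi} for eigenvalues $\lambda_u\ge\lambda_v>\mu$, then $u\ge v$, strictly everywhere if $\lambda_u>\lambda_v$, and $u=v$ if $\lambda_u=\lambda_v$. To prove it, set $s=\max_i v_i/u_i>0$, attained at an index $j_0$, so that $z:=su-v\ge 0$ with $z_{j_0}=0$ and $v_{j_0}=su_{j_0}$. Using $u$'s equation to express $B(su)+\beta(su)^{[3]}=\lambda_u(su)+\beta s(s^2-1)u^{[3]}$, subtracting $v$'s equation, and reading off the $j_0$-th entry --- where the cubic terms cancel because $v_{j_0}=su_{j_0}$ (this is precisely where monotonicity of $t\mapsto t^3$ is needed) --- yields
\[
(Bz)_{j_0}=s\,u_{j_0}(\lambda_u-\lambda_v)+\beta\,s(s^2-1)\,u_{j_0}^{3}.
\]
On the other hand $(Bz)_{j_0}=\sum_{k\neq j_0}b_{j_0 k}z_k\le 0$, since $b_{j_0 k}\le 0$ and $z_k\ge 0$. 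Hence $\beta s(s^2-1)u_{j_0}^{3}\le -s\,u_{j_0}(\lambda_u-\lambda_v)\le 0$, forcing $s\le 1$, i.e.\ $v\le u$; if $\lambda_u>\lambda_v$ the right-hand side is strictly negative, so $s<1$ and $v<u$ componentwise; if $\lambda_u=\lambda_v$, the same argument with $u$ and $v$ interchanged also gives $u\le v$. Taking $\lambda_u=\lambda_v$ proves uniqueness, and taking $\lambda_u=\lambda_2>\lambda_1=\lambda_v$ proves assertion~(1).

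\emph{Continuity.} Finally, for~(2) I would apply the implicit function theorem to $G(u,\lambda)=Bu+\beta u^{[3]}-\lambda u$, whose $u$-Jacobian is $J(u,\lambda)=B+3\beta\,\mathrm{diag}(u^{[2]})-\lambda I$. At $u=u(\lambda)>0$, the identity $(B+\beta\,\mathrm{diag}(u^{[2]}))u=\lambda u$ together with Perron--Frobenius applied to $\sigma I-(B+\beta\,\mathrm{diag}(u^{[2]}))$ (which is nonnegative and irreducible for $\sigma$ large) shows that $\lambda=\lambda_{min}(B+\beta\,\mathrm{diag}(u^{[2]}))$, so $B+\beta\,\mathrm{diag}(u^{[2]})-\lambda I\succeq 0$; adding $2\beta\,\mathrm{diag}(u^{[2]})\succ 0$ gives $J(u(\lambda),\lambda)\succ 0$, hence nonsingular. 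The implicit function theorem then produces a $C^1$ solution branch through $(\lambda,u(\lambda))$ which stays positive by continuity and, by the uniqueness already proved, coincides with $u(\cdot)$; therefore $u(\cdot)$ is $C^1$ --- in particular continuous --- on $(\mu,\infty)$.
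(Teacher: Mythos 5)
This lemma is imported: the paper states it with a citation to Choi et al.\ and gives no proof of its own, so there is nothing in-paper to match your argument against. Taken on its own merits, your proof is correct and self-contained. Existence via minimizing the coercive functional $G_\lambda$, the sign-flip $u^*\mapsto|u^*|$ (valid because the off-diagonal entries of the $M$-matrix $B$ are nonpositive), and strict positivity from the entrywise-positive inverse of the irreducible nonsingular $M$-matrix $B+\beta\,\mathrm{diag}((u^*)^{[2]})$ all go through; your comparison inequality at the maximizing index $j_0$ of $v_i/u_i$ is the right device and cleanly yields uniqueness and assertion (1) in one stroke; and the continuity argument via the implicit function theorem rests on exactly the nonsingularity of $J(u,\lambda)$ that the paper itself establishes later as Lemma~\ref{lem:inverse}, combined with the uniqueness you have already proved to identify the local $C^1$ branch with $u(\cdot)$. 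Two points worth flagging: your existence step uses that $B$ is symmetric (so that $\nabla(\tfrac12 u^TBu)=Bu$ and $\mu=\lambda_{\min}(B)>0$), which holds in this paper's finite-difference setting and in the Stieltjes-matrix setting of the cited reference, but should be stated as a hypothesis; and you actually prove more than the lemma claims, namely $C^1$ dependence of $u$ on $\lambda$, which the paper only asserts informally elsewhere (in the proof of Theorem~\ref{thm:bisection}, where $\partial u(\lambda)/\partial\lambda=J(u,\lambda)^{-1}u$ is used), so your argument would in fact strengthen the paper's justification of that step.
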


The convergence properties of Newton iteration for \eqref{equ:iter} are formulated from \cite{choi2002global} in the following theorem.
\begin{theorem}\label{thm:newton}
Let $p$ be a positive eigenvector of $B$ corresponding to $\lambda_{min}(B)$. Let $u$ be the unique positive solution of \eqref{equ:choi} for some $\lambda>\lambda_{min}(B)$. Let $u_0=\alpha p$, where $\alpha$ is large enough such that
\begin{equation}\label{equ:condition}
u_0>u,\quad \underset{1\le i\le n}{\text{min}}\beta (\alpha p_i)^2>\lambda-\lambda_{min}(B).
\end{equation}
Then the iteration \eqref{equ:iter} converges to $u$ monotonically, $u<\cdots<u_2<u_1<u_0$. When $u_k$ is sufficiently close to $u$, \eqref{equ:iter} is locally quadratically convergent.
\end{theorem}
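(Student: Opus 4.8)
The plan is to reduce the statement to two independent facts: (i) the Newton iteration \eqref{equ:iter} for the map $G(u)=\mathcal{A}(u)u-\lambda u$ produces a sequence that decreases monotonically and stays above the solution $u$, and (ii) once an iterate is close to $u$ the standard Kantorovich-type local theory applies. For (i) I would first show that $u_1<u_0$: since $J(u_0,\lambda)=\mathcal{A}(u_0)-\lambda I+2\beta\,\mathrm{diag}(u_0^{[2]})$ and condition \eqref{equ:condition} guarantees $\min_i\beta(\alpha p_i)^2>\lambda-\lambda_{min}(B)$, the matrix $J(u_0,\lambda)$ is a strictly diagonally dominant (in the $M$-matrix sense) irreducible $M$-matrix, hence its inverse is entrywise positive; combined with $u_0>u$ and a direct computation of $G(u_0)>0=G(u)$ one gets $u_1-u=J(u_0,\lambda)^{-1}\bigl(J(u_0,\lambda)u_0-2\beta\,\mathrm{diag}(u_0^{[2]})u_0\bigr)-u$ and, after substituting $G(u_0)=\mathcal{A}(u_0)u_0-\lambda u_0$, this rearranges to $u_1-u=J(u_0,\lambda)^{-1}G(u_0)\ge0$ with strict inequality by irreducibility.

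Next I would run an induction. Assuming $u<u_k<u_{k-1}$, I first check that $J(u_k,\lambda)$ is still a nonsingular $M$-matrix (here one needs that the relevant lower bound on $\beta(u_k)_i^2$ is preserved along the iteration, which follows because the sequence stays below $u_0$ but—crucially—also must be shown to stay suitably large; this is exactly where \cite{choi2002global} does the careful work). Granting that, $J(u_k,\lambda)^{-1}\ge0$ entrywise, and then the two inequalities $u_{k+1}>u$ and $u_{k+1}<u_k$ follow from the identities
\[
u_{k+1}-u=J(u_k,\lambda)^{-1}G(u_k),\qquad u_k-u_{k+1}=J(u_k,\lambda)^{-1}\bigl(2\beta\,\mathrm{diag}(u_k^{[2]}-u_k u^{T}\!/\!\cdots)\bigr),
\]
more honestly: $u_k-u_{k+1}=J(u_k,\lambda)^{-1}\bigl(G(u_k)+2\beta(\mathrm{diag}(u_k^{[2]})-\text{lower order})\bigr)$ where the bracket is nonnegative because $u<u_k$ makes $G(u_k)\ge0$ (convexity of the componentwise cubic) and the Newton correction term is built from $\mathrm{diag}(u_k^{[2]})u_k\ge0$. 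Monotone boundedness then yields a limit $u_\infty\ge u$ solving $G(u_\infty)=0$, and uniqueness of the positive solution from Lemma~\ref{lem:continuous} forces $u_\infty=u$.

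For the local quadratic rate I would invoke the classical Newton convergence theorem: $G$ is a polynomial map, hence $C^\infty$, and at the solution $u$ the Jacobian $J(u,\lambda)=\mathcal{A}(u)-\lambda I+2\beta\,\mathrm{diag}(u^{[2]})$ is nonsingular — indeed it is again an $M$-matrix because at $u$ we have $\mathcal{A}(u)u=\lambda u$ with $u>0$, so $\mathcal{A}(u)-\lambda I$ has a positive null vector and adding the strictly positive diagonal $2\beta\,\mathrm{diag}(u^{[2]})$ makes it a nonsingular irreducible $M$-matrix by a Perron–Frobenius argument. With $J(u,\lambda)$ invertible and $G''$ bounded on a neighborhood, the standard estimate $\|u_{k+1}-u\|\le C\|u_k-u\|^2$ holds for $u_k$ in that neighborhood. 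I expect the main obstacle to be the inductive verification that $J(u_k,\lambda)$ remains a nonsingular $M$-matrix for every $k$ — i.e. that the iterates do not drop so low that the diagonal dominance coming from $2\beta\,\mathrm{diag}(u_k^{[2]})$ is lost; this is the technical heart borrowed from \cite{choi2002global} and is what makes the global monotone convergence nontrivial rather than a one-line consequence of $M$-matrix positivity.
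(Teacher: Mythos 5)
The paper itself gives no proof of this theorem --- it is imported verbatim from \cite{choi2002global} --- so what follows compares your sketch with the argument that reference actually requires. Your overall architecture is the right one: inverse-positivity of the $M$-matrix Jacobian drives a monotone Newton sequence, and the standard local theory gives the quadratic rate once $J(u,\lambda)$ is shown nonsingular at the solution. The local part of your sketch is fine.

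The global part, however, rests on an identity that is false and on a conflation of the two inequalities you must prove. Since \eqref{equ:iter} is exactly the Newton step $u_{k+1}=u_k-J(u_k,\lambda)^{-1}r(u_k,\lambda)$ for $r(v,\lambda)=\mathcal{A}(v)v-\lambda v$, one has
\[
J(u_k,\lambda)^{-1}r(u_k,\lambda)=u_k-u_{k+1},
\]
so this quantity controls the \emph{decrease} $u_{k+1}<u_k$ (given $r(u_k,\lambda)\ge0$ and $J(u_k,\lambda)^{-1}\ge0$), not the lower bound $u_{k+1}>u$; your claim ``$u_1-u=J(u_0,\lambda)^{-1}G(u_0)$'' is not the Newton update. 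Both missing inequalities come from componentwise convexity of $v\mapsto\beta v_i^3+(Bv)_i-\lambda v_i$ on the nonnegative orthant: (i) the supporting-hyperplane inequality at $u_k$ evaluated at $u$ gives $0=r(u,\lambda)\ge r(u_k,\lambda)+J(u_k,\lambda)(u-u_k)$, hence $u_k-u\ge J(u_k,\lambda)^{-1}r(u_k,\lambda)=u_k-u_{k+1}$, i.e.\ $u_{k+1}\ge u$; (ii) the same inequality between $u_{k-1}$ and $u_k$ gives $r(u_k,\lambda)\ge r(u_{k-1},\lambda)+J(u_{k-1},\lambda)(u_k-u_{k-1})=0$, which is what guarantees $r(u_k,\lambda)\ge0$ at every step (at $k=0$ this is the second condition in \eqref{equ:condition}). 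Your stated reason --- that $u<u_k$ forces $G(u_k)\ge0$ --- is incorrect, because $B-\lambda I$ has negative off-diagonal entries, so $r(\cdot,\lambda)$ is not order-preserving. Finally, the point you flag as the unproved ``technical heart,'' namely that $J(u_k,\lambda)$ remains a nonsingular $M$-matrix, is in fact immediate from the induction hypothesis $u_k>u>0$: $J(u_k,\lambda)$ has the same off-diagonal part as $J(u,\lambda)$ and a strictly larger diagonal, and $J(u,\lambda)$ is a nonsingular $M$-matrix by the argument of Lemma~\ref{lem:inverse} applied to the positive solution of \eqref{equ:choi}. With these corrections the induction closes and the monotone limit equals $u$ by uniqueness (Lemma~\ref{lem:continuous}).
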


Based on Theorem \ref{thm:newton} and Lemma \ref{lem:continuous}, we propose an algorithm, called the Newton-Bisection iteration (NBI), to solve the positive eigenvector of NEPv \eqref{equ:NEPv}.
\begin{algorithm}[H]
\caption{Newton-Bisection Iteration (NBI)}\label{alg:NBI}
\begin{algorithmic}[1]
\State Given $[a,b]$ the initial interval for $\lambda$ with $a>\mu$. Given initial point $u_0$ and $tol>0$.
\For{$k=1,2,\cdots$}
    \State Let $\lambda_k=\frac{a+b}{2}$ and $u_k^0=u_0$.
    \State (Solve $\beta\text{diag}(u^{[2]})u+Bu=\lambda_k u$ by Newton iteration, i.e. step 5.-7.)
    \For{$l=0,1,\cdots$}
        \State Solve the linear system $[3\beta\text{diag}((u_k^l)^{[2]})+B-\lambda_k I]u=2\beta\text{diag}((u_k^l)^{[2]})u_k^l$.
    \EndFor
    \State $u_k=u_k^l$. If $|\|u_k\|-1|<tol$, stop. $b=\frac{a+b}{2}$ if $\|u_k\|>1$, $a=\frac{a+b}{2}$ otherwise.
\EndFor
\end{algorithmic}
\end{algorithm}
\subsection{Complexity analysis of NBI}
In this subsection, we give the detailed convergence and computational complexity analysis for NBI. Let $[a,b]$ be the initial interval for the bisection iteration. We always assume that $\lambda_{min}(B)<a<\lambda_*<b$, where $\lambda_*$ is the positive eigenvalue for \eqref{equ:NEPv}, and $u_0$ is sufficiently large to satisfy \eqref{equ:condition} for $\lambda=b$. First, we offer the convergence rate for the outer bisection iteration.
\begin{lemma}\label{lem:inverse}
If $(u,\lambda)$ is a positive eigenpair of NEPv \eqref{equ:NEPv}, then $J(u,\lambda)$ is a nonsingular $M$-matrix and is invertible.
\end{lemma}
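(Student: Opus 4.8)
The plan is to exhibit a strictly positive vector that $J(u,\lambda)$ maps to a strictly positive vector, and then invoke the standard characterization of nonsingular $M$-matrices among $Z$-matrices.

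First I would record the structure of $J$. By the definition in Section~\ref{sec:pre},
\[
J(u,\lambda)=\mathcal{A}(u)-\lambda I+2\beta\,\mathrm{diag}(u^{[2]})=B+3\beta\,\mathrm{diag}(u^{[2]})-\lambda I .
\]
Since $3\beta\,\mathrm{diag}(u^{[2]})-\lambda I$ is diagonal, the off-diagonal entries of $J(u,\lambda)$ coincide with those of $B$, which are nonpositive because $B$ is an $M$-matrix; hence $J(u,\lambda)$ is a $Z$-matrix, and it inherits the sparsity pattern of $B$, so it is irreducible.

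Next I would use the eigen-equation. Because $(u,\lambda)$ is a positive eigenpair of \eqref{equ:NEPv}, we have $r(u,\lambda)=\mathcal{A}(u)u-\lambda u=0$, and therefore
\[
J(u,\lambda)u=\bigl(\mathcal{A}(u)-\lambda I\bigr)u+2\beta\,\mathrm{diag}(u^{[2]})u=r(u,\lambda)+2\beta\,\mathrm{diag}(u^{[2]})u=2\beta\,\mathrm{diag}(u^{[2]})u .
\]
Since $\beta>0$ and $u>0$ componentwise, the $i$-th entry of the right-hand side equals $2\beta u_i^3>0$, so $J(u,\lambda)u>0$. Finally I would conclude by the equivalence (see \cite{varga1962iterative}): a $Z$-matrix $Z$ is a nonsingular $M$-matrix if and only if there exists a vector $v>0$ with $Zv>0$. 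Taking $Z=J(u,\lambda)$ and $v=u$, which is legitimate by the previous display, shows that $J(u,\lambda)$ is a nonsingular $M$-matrix; in particular it is invertible, and indeed $J(u,\lambda)^{-1}\ge 0$.

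I do not expect a genuine obstacle here: the proof is essentially immediate once the eigen-equation is used. The only points requiring care are that the cancellation $J(u,\lambda)u=2\beta\,\mathrm{diag}(u^{[2]})u$ relies on $\lambda$ being exactly the eigenvalue associated with $u$ (for an arbitrary $\lambda$ the argument would break down), and that nonsingularity should be obtained by quoting the correct characterization of nonsingular $M$-matrices rather than attempting a direct determinant or spectral computation.
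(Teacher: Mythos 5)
Your proof is correct, but it takes a different route from the paper's. The paper argues spectrally: since $\mathcal{A}(u)=\beta\,\mathrm{diag}(u^{[2]})+B$ is an irreducible nonsingular $M$-matrix and $u>0$ is an eigenvector, Perron--Frobenius forces $\lambda$ to be the \emph{smallest} eigenvalue of $\mathcal{A}(u)$, so $\mathcal{A}(u)-\lambda I$ is a singular (positive semidefinite) $M$-matrix, and adding the strictly positive diagonal $2\beta\,\mathrm{diag}(u^{[2]})$ then gives a nonsingular $M$-matrix. You instead verify the semipositivity criterion directly: $J(u,\lambda)$ is a $Z$-matrix and $J(u,\lambda)u=2\beta\,\mathrm{diag}(u^{[2]})u>0$, which by the standard characterization (a $Z$-matrix $Z$ with $Zv>0$ for some $v>0$ is a nonsingular $M$-matrix) gives the claim. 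Your cancellation $J(u,\lambda)u=r(u,\lambda)+2\beta\,\mathrm{diag}(u^{[2]})u$ is exactly right, and your approach has the advantages of not needing irreducibility at all (your remark about $J$ being irreducible is harmless but unnecessary) and of exhibiting an explicit positive vector certifying $J(u,\lambda)^{-1}\ge 0$. What the paper's spectral route buys is the identification of $\lambda$ as $\lambda_{\min}(\mathcal{A}(u))$, a fact it reuses later (e.g.\ the bound $\lambda_{\min}(\mathcal{A}(u)-\lambda I)\ge 0$ in the proof of Theorem~\ref{thm:bisection}), so the two arguments are complementary rather than one subsuming the other.
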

\begin{proof}
Since $\mathcal{A}(u)=\beta\text{diag}(u^{[2]})+B$ is an irreducible nonsingular $M$-matrix for any $u>0$ and $\beta>0$, $\lambda$ is indeed the smallest eigenvalue of $\mathcal{A}(u)$. Then, $\mathcal{A}(u)-\lambda I$ is a positive semidefinite $M$-matrix. Thus, $J(u,\lambda)=\mathcal{A}(u)-\lambda I+2\beta\mathrm{diag}(u^{[2]})$ is a nonsingular $M$-matrix.
\end{proof}
\begin{theorem}\label{thm:bisection}
Suppose $(u_*,\lambda_*)$ is a positive eigenpair of NEPv \eqref{equ:NEPv}, $\{(u_k,\lambda_k)\}$ is the sequence generated in the bisection iteration of NBI. Then
\begin{equation*}\label{equ:bisection}
\|u_k-u_*\|\le \frac{M(b-a)}{2^k},
\end{equation*}
where $M=\|u_0\|/({2\beta\min(u_*^{[2]})})$.
\end{theorem}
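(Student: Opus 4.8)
\noindent\textit{Proof strategy.} The plan is to bound the error as $\|u_k-u_*\|\le|\lambda_k-\lambda_*|\cdot L$, where $|\lambda_k-\lambda_*|\le(b-a)/2^{k}$ comes from the bisection and $L$ is a Lipschitz modulus of the map $\lambda\mapsto u(\lambda)$ of Lemma~\ref{lem:continuous}; the substance of the proof is showing that $L$ can be taken equal to $M$.

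\emph{Step 1 (bisection decay).} The working interval is halved at every step, and it always brackets $\lambda_*$: the inner loop returns $u_k=u(\lambda_k)$ by Theorem~\ref{thm:newton} (the hypothesis on $u_0$ at $\lambda=b$ carries over to every $\lambda_k\le b$), the scalar function $\lambda\mapsto\|u(\lambda)\|$ is strictly increasing by Lemma~\ref{lem:continuous}(1), and $\|u_*\|=1$, so $\|u_k\|>1$ exactly when $\lambda_k>\lambda_*$ and the update rule preserves $\lambda_*\in[a,b]$. Since $\lambda_k$ is the midpoint of an interval of length $(b-a)/2^{k-1}$ containing $\lambda_*$, we get $|\lambda_k-\lambda_*|\le(b-a)/2^{k}$.

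\emph{Step 2 (Lipschitz estimate when $\lambda_k\ge\lambda_*$).} Subtracting $\mathcal{A}(u_*)u_*=\lambda_*u_*$ from $\mathcal{A}(u_k)u_k=\lambda_ku_k$ and factoring $(u_k)_i^3-(u_*)_i^3$, a short computation gives $(\beta D+B-\lambda_kI)(u_k-u_*)=(\lambda_k-\lambda_*)\,u_*$, where $D=\mathrm{diag}(d)$ with $d_i=(u_k)_i^2+(u_k)_i(u_*)_i+(u_*)_i^2$. Since $\lambda_k$ is the smallest eigenvalue of the irreducible nonsingular $M$-matrix $\mathcal{A}(u_k)$ (argued as in the proof of Lemma~\ref{lem:inverse}), $\mathcal{A}(u_k)-\lambda_kI$ is a singular positive semidefinite $M$-matrix, and therefore $\beta D+B-\lambda_kI=(\mathcal{A}(u_k)-\lambda_kI)+\beta\,\mathrm{diag}(d-u_k^{[2]})$ is a nonsingular $M$-matrix with nonnegative inverse. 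When $\lambda_k\ge\lambda_*$ one has $u_k\ge u_*>0$, hence $(d-u_k^{[2]})_i=(u_k)_i(u_*)_i+(u_*)_i^2\ge 2(u_*)_i^2\ge c$ with $c:=2\min(u_*^{[2]})$; thus $\beta D+B-\lambda_kI$ equals the nonsingular $M$-matrix $(\mathcal{A}(u_k)-\lambda_kI)+\beta cI$ plus a nonnegative diagonal matrix, so from the identity $N_2^{-1}-N_1^{-1}=-N_1^{-1}(N_2-N_1)N_2^{-1}$ (all inverses being nonnegative) we get $(\beta D+B-\lambda_kI)^{-1}\le\big((\mathcal{A}(u_k)-\lambda_kI)+\beta cI\big)^{-1}$. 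As $(\mathcal{A}(u_k)-\lambda_kI)u_k=0$, the matrix $(\mathcal{A}(u_k)-\lambda_kI)+\beta cI$ sends $u_k$ to $\beta c\,u_k$, so its inverse sends $u_k$ to $(\beta c)^{-1}u_k$; using $0<u_*\le u_k$ we conclude
\[0\le u_k-u_*=(\lambda_k-\lambda_*)(\beta D+B-\lambda_kI)^{-1}u_*\le\frac{\lambda_k-\lambda_*}{2\beta\min(u_*^{[2]})}\,u_k.\]
Taking norms and using $0<u(\lambda_k)<u_0$, so $\|u_k\|\le\|u_0\|$, we obtain $\|u_k-u_*\|\le M(\lambda_k-\lambda_*)\le M(b-a)/2^{k}$.

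\emph{Step 3 (the complementary case, and the main obstacle).} For $\lambda_k<\lambda_*$, the symmetric identity $(\beta D+B-\lambda_*I)(u_*-u_k)=(\lambda_*-\lambda_k)u_k$ and the same reasoning produce a bound carrying $\min(u_k^{[2]})$ rather than $\min(u_*^{[2]})$ in the denominator; this is the place where I expect the real difficulty to be. To recover the stated constant $M$ one needs an extra ingredient — for instance the lower bound $u_k\ge u(a)$ together with the prescribed size of $u_0$, or restricting the Lipschitz estimate to the part of the bracket lying above $\lambda_*$ while keeping the unconditional decay of $|\lambda_k-\lambda_*|$ (note $\min(u_k^{[2]})\to\min(u_*^{[2]})$ as $k\to\infty$, so only finitely many early iterates are affected). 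As a consistency check one may alternatively differentiate $\mathcal{A}(u(\lambda))u(\lambda)=\lambda u(\lambda)$ — the differentiability of $u(\lambda)$ is asserted in \cite{choi2002global} — to get $J(u(\lambda),\lambda)u'(\lambda)=u(\lambda)$ and, by the identical $M$-matrix argument, $0\le u'(\lambda)\le u(\lambda)/\big(2\beta\min(u(\lambda)^{[2]})\big)$; integrating $u'$ from $\lambda_*$ to $\lambda_k$ then reproduces the estimate, again modulo a lower bound on $\min(u(\lambda)^{[2]})$ over the relevant $\lambda$-range.
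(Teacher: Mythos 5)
Your argument is sound but follows a genuinely different route from the paper. The paper's proof is precisely the ``consistency check'' you relegate to Step~3: it differentiates $\mathcal{A}(u(\lambda))u(\lambda)=\lambda u(\lambda)$ to get $\partial u/\partial\lambda=J(u(\lambda),\lambda)^{-1}u(\lambda)$, bounds $\|J^{-1}u\|\le\|u_0\|/\lambda_{\min}(J)$ using that $J=2\beta\,\mathrm{diag}(u^{[2]})+(\mathcal{A}(u)-\lambda I)$ is symmetric positive definite with $\lambda_{\min}(J)\ge 2\beta\min(u^{[2]})$ (Lemma~\ref{lem:inverse}), controls $\min(u(\lambda)^{[2]})$ from below by $\min(u(a)^{[2]})$ via the monotonicity $u(a)<u(\lambda)<u_0$ on $[a,b]$ from Lemma~\ref{lem:continuous}, and then integrates to get $\|u_k-u_*\|\le M|\lambda_k-\lambda_*|$. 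Your Step~2 replaces this with a discrete identity $(\beta D+B-\lambda_kI)(u_k-u_*)=(\lambda_k-\lambda_*)u_*$ and an entrywise $M$-matrix comparison; that computation is correct and, for $\lambda_k\ge\lambda_*$, it actually delivers the constant $\|u_0\|/(2\beta\min(u_*^{[2]}))$ exactly as stated in the theorem, which the paper's own argument does not.

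The obstacle you flag in Step~3 is real, and it is worth knowing that the paper resolves it simply by changing the constant: in the body of the paper's proof $M$ is defined as $\|u_0\|/(2\beta\min(u(a)^{[2]}))$, with $u(a)<u_*$, which is larger than the $M=\|u_0\|/(2\beta\min(u_*^{[2]}))$ appearing in the theorem statement. So the paper proves the estimate only with the weaker (larger) constant built from $u(a)$, and the discrepancy you detected between the two cases is an inconsistency present in the paper itself rather than a defect of your approach. Completing your Step~3 with the lower bound $u_k\ge u(a)$ (valid since $\lambda_k\ge a$) reproduces exactly what the paper establishes; obtaining the stated constant with $u_*$ for the iterates with $\lambda_k<\lambda_*$ would require an additional argument that neither you nor the paper supplies.
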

\begin{proof}
According to Lemma \ref{lem:continuous}, $u(\lambda)$ is continuous on $(\mu,\infty)$, for each $(u(\lambda),\lambda)$ satisfying
\[r(u(\lambda),\lambda)=\mathcal{A}(u(\lambda))u(\lambda)-\lambda u(\lambda)=0.\]
Combined with Lemma \ref{lem:inverse}, $J(u,\lambda)$ is invertible, then we have $\partial u(\lambda)/\partial \lambda=J(u,\lambda)^{-1}u$. For any $u$ that satisfies $u(a)<u<u_0$, we also have
\begin{equation*}
\begin{aligned}
\|J(u,\lambda)^{-1}u\|&=\|(3\beta\text{diag}(u(\lambda)^{[2]})+B-\lambda I)^{-1}u\|\\
&\le\frac{\|u_0\|}{\lambda_{min}(2\beta\text{diag}(u^{[2]})+\mathcal{A}(u)-\lambda I)}\\
&\le \frac{\|u_0\|}{2\beta\text{min}(u^{[2]})+\lambda_{min}(\mathcal{A}(u)-\lambda I)}\\
&\le \frac{\|u_0\|}{2\beta\text{min}(u(a)^{[2]})}.
\end{aligned}
\end{equation*}
According to Lemma \ref{lem:continuous}, $u(a)<u_k<u_0$. Denote $M=\|u_0\|/({2\beta\text{min}(u(a)^{[2]})})$, then for the $k$-th bisection iteration, we have
\begin{equation*}
\|u_k-u_*\|\le M\|\lambda_k-\lambda_*\|\le\frac{M(b-a)}{2^k}.
\end{equation*}
\end{proof}
\begin{remark}\label{rem:bisection}
According to Theorem \ref{thm:bisection}, to reach the stopping condition $|\|u_k\|-1|<tol$, it needs at most $K=\lceil log_2(M(b-a)/tol)\rceil$ steps, since $|\|u_K\|-1|\le \|u_K-u_*\|\le tol$. However, we only solve the nonlinear equation iteratively in the step 4-7, so $u_k$ might not be the real solution for \eqref{equ:choi}. Assume when the Newton iteration solving \eqref{equ:choi} stops, $\|u_k-u_k^*\|\le tol/2$, where $u_k^*$ is the exact solution for \eqref{equ:choi}. Then we have
\[\|u_k-u_*\|\le \|u_k-u_k^*\|+\|u_k^*-u_*\|\le \frac{M(b-a)}{2^k}+\frac{tol}{2}.\]
The outer bisection iteration will be at most $K=\lceil log_2(M(b-a)/tol)\rceil+1$.
\end{remark}
Now, let us look into the Newton iteration for solving \eqref{equ:choi} during each bisection iteration. According to Theorem \ref{thm:newton}, it still satisfies the classical locally quadratic convergence. Suppose $u_k^*$ is the positive eigenvector for \eqref{equ:choi}, corresponding to a given $\lambda_k$ in the $k$-th bisection iteration, and $\{u_k^l\}$ is the sequence generated by the Newton iteration. Denote $r_k(u)=\mathcal{A}(u)u-\lambda_k u$. We intend to give an explicit estimation about when it will fall into the scope of quadratic convergence. Then we can obtain the total number of iterations.
\begin{theorem}\label{thm:inneriter}
Let
$l_0=\lceil(\lambda_{max}(B)+3\beta \max(u_0^{[2]})-\lambda_k)\|u_0-u_k^*\|_1/\eta\rceil,$
where $\eta=4\beta^2\min(u_k^*)^4/M_1$ and $M_1>0$ is a constant determined by $u_0,~u_k^*$.
For any $l>l_0$,
\begin{equation}\label{equ:quadratic}
\|r_k(u_k^{l+1})\|\le \frac{M_1}{8\beta^2\min(u_k^*)^4}\|r_k(u_k^l)\|^2.
\end{equation}
\end{theorem}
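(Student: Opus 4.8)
The plan is to treat \eqref{equ:iter} as the plain Newton iteration for the residual map $v\mapsto r_k(v)=\mathcal{A}(v)v-\lambda_k v$, whose only nonlinearity is the cubic term $\beta v^{[3]}$, and then run the classical Newton remainder estimate with all constants made explicit through the $M$-matrix structure and the monotone convergence already supplied by Theorem~\ref{thm:newton}. First I would record the consequences of Theorem~\ref{thm:newton} and Lemma~\ref{lem:continuous} in this situation: since $u_0$ satisfies \eqref{equ:condition} for $\lambda=b$, it satisfies \eqref{equ:condition} for every $\lambda_k\in(a,b)$ as well ($u_0>u(b)>u(\lambda_k)=u_k^*$ by Lemma~\ref{lem:continuous}), so steps~5--7 of NBI produce a monotone sequence $u_k^*<\cdots<u_k^{l+1}<u_k^l<\cdots<u_k^0=u_0$; in particular every iterate, and every segment $\{u_k^l+t(u_k^{l+1}-u_k^l):t\in[0,1]\}$, lies in the order interval $[u_k^*,u_0]$.

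Next I would establish the two quantitative ingredients. \emph{(i) A uniform lower bound for $J$.} Writing $J(u_k^l,\lambda_k)=2\beta\,\mathrm{diag}((u_k^l)^{[2]})+(\mathcal{A}(u_k^l)-\lambda_k I)$ and using $u_k^l\ge u_k^*$ together with Weyl's inequality, $\lambda_{min}(\mathcal{A}(u_k^l))\ge\lambda_{min}(\mathcal{A}(u_k^*))=\lambda_k$, the last equality holding because $u_k^*>0$ is the Perron vector of the irreducible $M$-matrix $\mathcal{A}(u_k^*)$ (exactly the argument in the proof of Lemma~\ref{lem:inverse}); hence $\mathcal{A}(u_k^l)-\lambda_k I$ is positive semidefinite and $\lambda_{min}(J(u_k^l,\lambda_k))\ge2\beta\min(u_k^*)^2$, i.e.\ $\|J(u_k^l,\lambda_k)^{-1}\|\le1/(2\beta\min(u_k^*)^2)$. \emph{(ii) A Lipschitz bound for the Jacobian.} For $v,w\in[u_k^*,u_0]$, $\|J(v,\lambda_k)-J(w,\lambda_k)\|=3\beta\max_i|v_i^2-w_i^2|\le6\beta\max(u_0)\|v-w\|$, so $v\mapsto J(v,\lambda_k)$ is Lipschitz on $[u_k^*,u_0]$ with constant $M_1:=6\beta\max(u_0)$, a quantity determined by $u_0$ and $u_k^*$.

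The quadratic estimate then drops out. Since $J(u_k^l,\lambda_k)(u_k^{l+1}-u_k^l)=-r_k(u_k^l)$, the integral form of the remainder gives $r_k(u_k^{l+1})=\int_0^1[J(u_k^l+t(u_k^{l+1}-u_k^l),\lambda_k)-J(u_k^l,\lambda_k)]\,dt\,(u_k^{l+1}-u_k^l)$ (equivalently, the exact cubic Taylor expansion gives $r_k(u_k^{l+1})=3\beta\,\mathrm{diag}(u_k^l)(u_k^{l+1}-u_k^l)^{[2]}+\beta(u_k^{l+1}-u_k^l)^{[3]}$), whence $\|r_k(u_k^{l+1})\|\le\tfrac{M_1}{2}\|u_k^{l+1}-u_k^l\|^2$ by (ii); combining with $\|u_k^{l+1}-u_k^l\|\le\|J(u_k^l,\lambda_k)^{-1}\|\,\|r_k(u_k^l)\|$ and (i) yields exactly \eqref{equ:quadratic}, and this bound in fact holds for every $l$. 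To pin down the index $l_0$ after which \eqref{equ:quadratic} is a genuine contraction, I would use the $M$-matrix structure once more: $J(u_k^l,\lambda_k)^{-1}\ge\Lambda^{-1}I$ entrywise with $\Lambda:=\lambda_{max}(B)+3\beta\max(u_0^{[2]})-\lambda_k\ge\lambda_{max}(J(u_k^l,\lambda_k))$, and $r_k(u_k^l)>0$ (immediate from the Newton relation and $u_k^l<u_k^{l-1}$), so $0<u_k^l-u_k^{l+1}=J(u_k^l,\lambda_k)^{-1}r_k(u_k^l)\ge\Lambda^{-1}r_k(u_k^l)$; summing the entries and telescoping the monotone sequence gives $\sum_{l\ge0}\|r_k(u_k^l)\|_1\le\Lambda\|u_0-u_k^*\|_1$. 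With $\eta=4\beta^2\min(u_k^*)^4/M_1$ (which is half the fixed-point radius $8\beta^2\min(u_k^*)^4/M_1$ of \eqref{equ:quadratic}) and $l_0=\lceil\Lambda\|u_0-u_k^*\|_1/\eta\rceil$, a pigeonhole over the indices $0,\dots,l_0$ forces some $l^*\le l_0$ with $\|r_k(u_k^{l^*})\|\le\|r_k(u_k^{l^*})\|_1<\eta$; iterating \eqref{equ:quadratic} from $l^*$ keeps the residual below $\eta$ and shrinks it quadratically, so for every $l>l_0$ the estimate \eqref{equ:quadratic} holds in the contracting regime.

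I expect the only delicate point to be item (i): one must notice that the semidefiniteness $\mathcal{A}(u_k^l)-\lambda_k I\succeq0$ is preserved all along the Newton sequence — this is what makes the denominator exactly $2\beta\min(u_k^*)^2$ rather than something weaker — and it rests on combining the monotonicity $u_k^l\ge u_k^*$ from Theorem~\ref{thm:newton} with the Perron--Frobenius identity $\lambda_k=\lambda_{min}(\mathcal{A}(u_k^*))$. The remaining work — the norm bookkeeping, checking that $M_1$ matches the factor $8$ in \eqref{equ:quadratic}, and the pigeonhole count — is routine.
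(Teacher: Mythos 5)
Your proposal is correct and follows essentially the same route as the paper's proof: use the monotonicity $u_k^*<\cdots<u_k^{l+1}<u_k^l<\cdots<u_0$ together with a per-step lower bound on the Newton displacement (of order $\|r_k(u_k^l)\|/\Lambda$ with $\Lambda=\lambda_{max}(B)+3\beta\max(u_0^{[2]})-\lambda_k$) to conclude by telescoping that the residual must drop below $\eta$ within $l_0$ steps, and then combine the Taylor/Lipschitz remainder bound $\tfrac{M_1}{2}\|u_k^{l+1}-u_k^l\|^2$ with $\|J(u_k^l,\lambda_k)^{-1}\|\le 1/(2\beta\min(u_k^*)^2)$ to get \eqref{equ:quadratic}. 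The only differences are minor refinements on your part — you make $M_1$ explicit, replace the paper's operator-norm inequality $\|J^{-1}r\|\ge\|r\|/\|J\|$ by the entrywise $M$-matrix bound $J^{-1}\ge\Lambda^{-1}I$, and correctly note that the quadratic inequality itself holds for all $l$, with $l_0$ only needed to enter the contracting regime.
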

\begin{proof}
Suppose $r_k(u_k^l)\ge\eta$ for any $l\le l_0$, we have
\begin{equation*}
\begin{aligned}
\|u_k^{l+1}-u_k^l\|&=\|J(u_k^l,\lambda_k)^{-1}r_k(u_k^l)\|\\
&\ge \frac{1}{\|3\beta\text{diag}((u_k^l)^{[2]})+B-\lambda_kI\|}\|r_k(u_k^l)\|\\
&\ge \frac{\eta}{3\beta\max(u_0^{[2]})+\lambda_{max}(B)-\lambda_k}.
\end{aligned}
\end{equation*}
The last inequality results from the monotonicity of $\{u_k^l\}_{l=1}^{\infty}$, that is, $u_k^*<\cdots<u_k^{l+1}<u_k^l<\cdots<u_0$. The monotonicity then leads to that \begin{equation*}
\|u_0-u_k^*\|_1\ge\|u_0-u_k^{l_0}\|_1=\sum\limits_{l=1}^{l_0}\|u_k^{l-1}-u_k^l\|_1
\ge l_0\frac{\eta}{3\beta\max(u_0^{[2]})+\lambda_{max}(B)-\lambda_k}.
\end{equation*}
Thus, we obtain that $l_0\le \lceil(\lambda_{max}(B)+3\beta \max(u_0^{[2]})-\lambda_k)\|u_0-u_k^*\|_1/\eta\rceil$. On the other hand, for $r_k(u_k^l)<\eta$, combined with $u_k^*<u_k^{l+1}<u_k^l<u_0$, we have
\begin{equation*}
\begin{aligned}
\|r_k(u_k^{l+1})\|& = \|r_k(u_k^{l+1})-(r_k(u_k^l)+\nabla_u r_k(u_k^l)(u_k^{l+1}-u_k^l))\|\\
(\text{Taylor's Theorem})&\le\frac{M_1}{2}\|u_k^{l+1}-u_k^l\|^2\\
&\le \frac{M_1}{8\beta^2\min(u_k^*)^4}\|r_k(u_k^l)\|^2\\
&< \frac{\eta}{2}.
\end{aligned}
\end{equation*}
Thus, for any $l> l_0$, \eqref{equ:quadratic} holds.
\end{proof}
According to \eqref{equ:quadratic}, for $l>l_0$, we have
\begin{equation*}
\begin{aligned}
&\|\frac{M_1}{8\beta^2\min(u_k^*)^4}r_k(u_k^{l+1})\|\le \|\frac{M_1}{8\beta^2\min(u_k^*)^4}r_k(u_k^{l})\|^2\\
\le& \|\frac{M_1}{8\beta^2\min(u_k^*)^4}r_k(u_k^{l_0+1})\|^{2^{l-l_0}}
\le (\frac{1}{2})^{2^{l-l_0}}.
\end{aligned}
\end{equation*}
To reach the precision that $\|r_k(u)\|<\epsilon$, we need at most $\lceil l_0+log_2log_2(1/\epsilon)\rceil$ iterative steps. Since $\nabla r_k(u)=3\beta\text{diag}(u^{[2]})+B-\lambda_k I$ is continuous and invertible for $u$, we also have that $\|u_k^l-u_k^*\|\le c\|r_k(u_k^l)-r_k(u_k^*)\|$ for some $c>0$, whenever $u_k^*<u<u_0$. To obtain the precision in Remark \ref{rem:bisection} that $\|u_k-u_k^*\|<\frac{tol}{2}$, we need at most $\lceil l_0+log_2log_2(2c/tol)\rceil$ iterative steps.
\begin{remark}
According to Lemma \ref{lem:continuous}, $u_k^*$ will be bounded by $u(a)$ and $u_0$ as $u(a)<u_k^*<u_0$. Here $a$ is the left end of the initial interval for bisection.
\begin{remark}\label{rem:initial}
In practice, we want to explore the information in the last bisection step. That is to use $u_{k-1}$ as the initial point for the Newton iteration instead of $u_0$ in the step 3 of Algorithm \ref{alg:NBI}. It is reasonable intuitively. Since $\|u_{k-1}-u_{k}\|\le M\|\lambda_{k-1}-\lambda_k\|\le M(b-a)/2^k$, where $M$ is the same as in Theorem \ref{thm:bisection}, $u_{k-1}$ sufficiently closes to $u_k$ to enter the quadratically convergence range faster as the bisection iteration proceeds. Although in this case, $u_k^0=u_{k-1}$ might not satisfy the condition \eqref{equ:condition}, we find in our numerical experiments that the algorithm can 'rectify' itself within one or two steps to guarantee the almost monotonicity.
\end{remark}
\end{remark}
At last, to complete the complexity analysis, let us discuss the cost of computing the linear system in step 6 of Algorithm \ref{alg:NBI}. If solving each linear system needs $T$ flops, the entire algorithm, with stopping tolerance as '$tol$', then needs approximately $K\cdot (l_0+log_2log_2(2c/tol))\cdot T$ flops.

Since the linear system in Algorithm \ref{alg:NBI} has a tridiagonal (or block tridiagonal) structure, it can be solved efficiently by both direct and iterative methods, see \cite{golub2013matrix}. In the following numerical experiments, we solve the system directly by the block tridiagonal LU factorization \cite{golub2013matrix} to obtain an 'exact' solution. Take $d=2$ in \eqref{equ:NEPV1} as an example, and suppose $u\in \mathbb{R}^n$ with $n=N^2$ in its discretization problem \eqref{equ:NEPv}. Then $B\in\mathbb{R}^{n\times n}$ is a block tridiagonal matrix with block dimension $N\times N$ and each block is of size $N\times N$. Then $T\approx N^3(7N/3+3)=O(n^2+n\sqrt{n})$.
\section{Numerical experiments}\label{sec:numerical}
In this section, we present some numerical results to illustrate our NBI algorithm. We compare NBI, NNI and the Matlab function 'fsolve'. All numerical experiments were performed on a Lenovo laptop with an Intel(R) Core(TM) Processor with access to 8GB of RAM using Matlab 2016b. In the first two examples below, we consider the finite difference discretization of GPE with $d=2$. $N$ is the number of discretization points except the ones in the bound along each direction. $n=N^2$ is the dimension of discretization eigenvector $u$. We also present an example with $d=3$ of small size in Example \ref{exa:example3}.
\subsection{Initial setting}
Firstly, an initial bisection interval $[a,b]$ is needed for NBI. Since our tests are to solve the discretization problem of \eqref{equ:NEPV1}, whose size becomes larger as the discretization gets finer and it will converges finally \cite{bao2013optimal}, we choose the interval as follows:
\begin{enumerate}[(1)]
\item For small size problem, let $a=\mu+eps$, where $\mu$ is the smallest eigenvalue of $B$, and $eps=10^{-3}$, or any appropriate small value. Then solve \eqref{equ:choi} with $\lambda$ given as $b=2a$ and check whether the solution $u$ satisfies $\|u\|>1$ or not. If not, let $a=b,~b=2a$ and repeat.
\item For problem of larger size, we use the same $[a,b]$ as the smaller one. Or we can use the eigenvalue of \eqref{equ:NEPv} computed from the problem with smaller size to set the interval heuristically.
\end{enumerate}

From our experiments, we find that as $\beta$ becomes larger, the positive eigenvalue of \eqref{equ:NEPv} increases, so the repeat times for finding $b$ also increases. However, for the problem of small size, it still can be computed quickly. Thus, we regard the $a,b$ as known constants.

Although \eqref{equ:condition} is required for the initial point $u_0$ of NBI theoretically, as noted in Remark \ref{rem:initial}, it can be relaxed and the algorithm still keep almost monotonic convergence. For convenient, we set $u_0=[1,\cdots,1]^T\in\mathbb{R}^{n}$ for NNI and 'fsolve'. Since the normlization is required by NNI, the initial point for NNI is $u_0=\frac{1}{\sqrt{n}}[1,\cdots,1]^T$.

For the Matlab function 'fsolve', we use the default settings except that $TolFun=10^{-10}$, which is an option in Matlab. To obtain an acceptable approximation solution, we set the stopping criterion for NBI as
\[|\|u\|-1|<10^{-7}.\]
The stopping criterion for NNI and the Newton iteration solving the subproblem of NBI is
\[\frac{\|u_k-u_{k-1}\|+\|\mathcal{A}(u_k)u_k-\lambda_ku_k\|}{\|u_k\|}<10^{-10}.\]
The maximum number of iterations allowed is 100 for NBI and NNI. For NNI, we use the Matlab function 'bicgstab' to solve the linear system with tolerance as $10^{-6}$ and maximum iteration number as $200$.
\begin{example}\label{exa:example1}
Consider the finite difference approximation with a grid size $h=1/(N+1)$ of \eqref{equ:NEPV1} with Dirichlet boundary conditions on $[0,1]\times[0,1]$, i.e.,
\begin{equation*}
(N+1)^2\beta\mathrm{diag}(u^{[2]})u+Bu=\lambda u,~u^Tu=1,
\end{equation*}
where $u\in\mathbb{R}^n$, $n=N^2$. $B=A+V$ where $A=I\otimes L_h+L_h\otimes I$ is a negative 2D Laplacian matrix with
\begin{equation*}
L_h=\frac{1}{h^2}
\begin{bmatrix}
2&-1& & &\\
-1&2&-1& &\\
&\ddots&\ddots&\ddots&\\
& &-1&2&-1\\
& & &-1&2
\end{bmatrix}
\in\mathbb{R}^{N\times N},
\end{equation*}
and $V={h^2}\mathrm{diag}(1^2+1^2,1^2+2^2,\cdots,N^2+N^2)$ is the discretization of the harmonic potential \cite{bao2003ground} $V(x,y)=x^2+y^2$.
\end{example}
For Example \ref{exa:example1}, Figure \ref{fig:quadratic} depicts how the residual $\mathcal{A}(u)u-\lambda u$ evolves versus the number of iterations for NNI and Newton iteration solving the unconstrained NEPv in NBI. Both of them indicate the quadratic convergence clearly. Table \ref{tab:NBIdetail} reports the number of iterations for each Newton iteration during the NBI. "Bi-Iter" denotes the number of outer bisection iterations to achieve convergence. "Newton-Iter" denotes the number of iterations for Newton iteration solving the subproblem in order and '3(8)' means that 3 repeats 8 times. "Violate" records the maximum number of violating the monotonicity in Newton iteration.
\begin{figure}
\includegraphics[width=.8\textwidth]{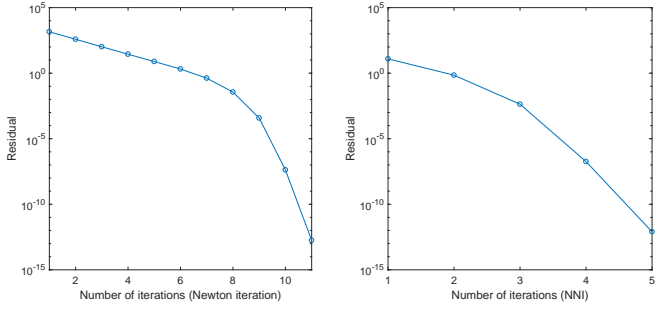}
\caption{The residual versus the number of iterations in the case $n=125,~\beta=1$. The left one is for Newton iteration with $\lambda=22.5$ and the right one is for NNI.}
\label{fig:quadratic}
\end{figure}
\begin{table}[htbp]
\caption{Numerical results for NBI when $\beta=1$, $[a,b]=[22,23]$.}
\label{tab:NBIdetail}
\begin{tabular}{|c|cccc|}
\hline
n     & Bi-Iter & Newton-Iter               & Violate & Residual   \\ \hline
225   & 18      & {[}12,5,4(4),3(8),2(4){]} & 2       & 1.7600e-13 \\
3639  & 16      & {[}15,5,4(4),3(8),2(2){]} & 2       & 3.7432e-12 \\
16129 & 17      & {[}17,5,4(4),3(8),2(3){]} & 2       & 1.6660e-11 \\ \hline
\end{tabular}
\end{table}
From the table, we see that the number of iterations that Newton iteration needs decrease versus the iteration for bisection, indicating that the solution for last bisection iteration step has fallen into the quadratic convergence range for the current Newton iteration. In all of our tests, the Newton iteration needs no more than 5 steps to convergence except the one for the first bisection iteration, it implies we have chosen a conservative initial point. And each Newton iteration still converges monotonicity except at most two steps. Figure \ref{fig:tol} and \ref{fig:bound} show the number of both outer and total iterations versus the magnitude of the stopping tolerance '$tol$' for bisection iteration and the interval. As we see, the number of iterations approximately increases linearly as the logarithm of the magnitude of $1/tol$ and $(b-a)$, which is consistent with our complexity analysis.
\begin{figure}
\includegraphics[width=.8\textwidth]{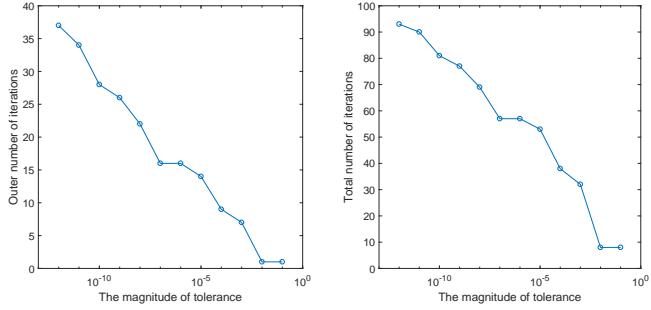}
\caption{The number of iterations versus the magnitude of tolerance for $\beta=1$, $n=3639$, $[a,b]=[22,23]$.}
\label{fig:tol}
\end{figure}
\begin{figure}
\includegraphics[width=.8\textwidth]{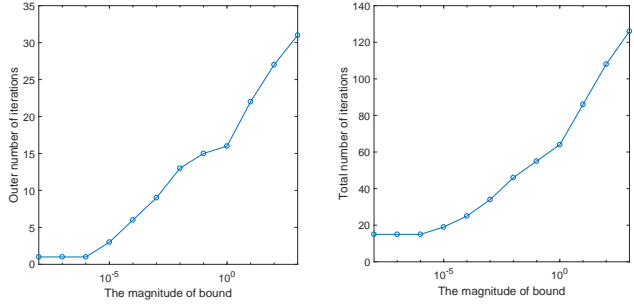}
\caption{The number of iterations versus the magnitude of bound for $\beta=1$, $n=3639$.}
\label{fig:bound}
\end{figure}

Table \ref{tab:compare1} reports the results obtained by NBI, NNI and fsolve. In the table, 'Iter' denotes the number of outer iterations, $[a,b]$ is the initial interval for NBI. When $n=127^2$, fsolve needs much more time than NBI and NNI to stop, we use '-' to represent this case. We find that both NBI and NNI are efficient compared with fsolve. Note that although NBI needs more iterations, solving the subproblem in each bisection iteration is easy and rather efficient and thus makes it more competitive particularly for relatively large $n$. We also find that although 'bicgstab' usually cannot obtain a solution of the linear system that meets the accuracy requirement within maximum iteration steps 200, NNI still works quite well and converges.
\begin{table}[H]
\caption{Numerical results for NBI, NNI and fsolve.}
\label{tab:compare1}
\begin{center}
\begin{tabular}{|c|cccc||cccc|}
\hline
solver & Iter                    & CPU(s)                       & Residual                        & $\lambda$ & Iter                    & CPU(s)                        & Residual                        & $\lambda$ \\ \hline
       & \multicolumn{8}{c|}{$\beta=50,~[a,b]=[80.9598,161.9196]$}                                                                                                                                                    \\ \hline
       & \multicolumn{4}{c||}{$n=15^2$}                                                                        & \multicolumn{4}{c|}{$n=31^2$}                                                                         \\ \hline
NBI    & \multicolumn{1}{c|}{19} & \multicolumn{1}{c|}{0.1264}  & \multicolumn{1}{c|}{2.4320e-13} & 100.4052  & \multicolumn{1}{c|}{21} & \multicolumn{1}{c|}{1.2969}   & \multicolumn{1}{c|}{8.7800e-13} & 100.8487  \\ \hline
NNI    & \multicolumn{1}{c|}{5}  & \multicolumn{1}{c|}{0.1063}  & \multicolumn{1}{c|}{2.7287e-13} & 100.4052  & \multicolumn{1}{c|}{6}  & \multicolumn{1}{c|}{0.3851}   & \multicolumn{1}{c|}{1.3404e-12} & 100.8487  \\ \hline
fsolve & \multicolumn{1}{c|}{22} & \multicolumn{1}{c|}{0.4588}  & \multicolumn{1}{c|}{1.5143e-13} & 100.4052  & \multicolumn{1}{c|}{16} & \multicolumn{1}{c|}{2.8695}   & \multicolumn{1}{c|}{2.8681e-13} & 100.8487  \\ \hline
       & \multicolumn{4}{c||}{$n=63^2$}                                                                        & \multicolumn{4}{c|}{$n=127^2$}                                                                        \\ \hline
NBI    & \multicolumn{1}{c|}{21} & \multicolumn{1}{c|}{15.6394} & \multicolumn{1}{c|}{4.0331e-12} & 100.9569  & \multicolumn{1}{c|}{22} & \multicolumn{1}{c|}{253.7449} & \multicolumn{1}{c|}{1.8729e-11} & 100.9838  \\ \hline
NNI    & \multicolumn{1}{c|}{6}  & \multicolumn{1}{c|}{10.1329} & \multicolumn{1}{c|}{4.6803e-12} & 100.9569  & \multicolumn{1}{c|}{6}  & \multicolumn{1}{c|}{265.3062} & \multicolumn{1}{c|}{2.7074e-11} & 100.9838  \\ \hline
fsolve & \multicolumn{1}{c|}{18} & \multicolumn{1}{c|}{41.3312} & \multicolumn{1}{c|}{1.1446e-12} & 100.9569  & \multicolumn{1}{c|}{-}  & \multicolumn{1}{c|}{-}        & \multicolumn{1}{c|}{-}          & -         \\ \hline
       & \multicolumn{8}{c|}{$\beta=100,~[a,b]=[166,170]$}                                                                                                                                                            \\ \hline
       & \multicolumn{4}{c||}{$n=15^2$}                                                                        & \multicolumn{4}{c|}{$n=31^2$}                                                                         \\ \hline
NBI    & \multicolumn{1}{c|}{16} & \multicolumn{1}{c|}{0.0788}  & \multicolumn{1}{c|}{2.9063e-13} & 166.0699  & \multicolumn{1}{c|}{16} & \multicolumn{1}{c|}{0.9340}   & \multicolumn{1}{c|}{1.0821e-12} & 167.0551  \\ \hline
NNI    & \multicolumn{1}{c|}{6}  & \multicolumn{1}{c|}{0.0231}  & \multicolumn{1}{c|}{3.8249e-13} & 166.0699  & \multicolumn{1}{c|}{6}  & \multicolumn{1}{c|}{0.3695}   & \multicolumn{1}{c|}{1.1576e-12} & 167.0551  \\ \hline
fsolve & \multicolumn{1}{c|}{14} & \multicolumn{1}{c|}{0.2504}  & \multicolumn{1}{c|}{1.1241e-13} & 166.0699  & \multicolumn{1}{c|}{16} & \multicolumn{1}{c|}{2.8015}   & \multicolumn{1}{c|}{3.2640e-13} & 167.0551  \\ \hline
       & \multicolumn{4}{c||}{$n=63^2$}                                                                        & \multicolumn{4}{c|}{$n=127^2$}                                                                        \\ \hline
NBI    & \multicolumn{1}{c|}{15} & \multicolumn{1}{c|}{11.1967} & \multicolumn{1}{c|}{4.0381e-12} & 167.2938  & \multicolumn{1}{c|}{16} & \multicolumn{1}{c|}{177.1247} & \multicolumn{1}{c|}{1.8998e-11} & 167.3528  \\ \hline
NNI    & \multicolumn{1}{c|}{6}  & \multicolumn{1}{c|}{9.0215}  & \multicolumn{1}{c|}{6.4312e-12} & 167.2938  & \multicolumn{1}{c|}{6}  & \multicolumn{1}{c|}{258.9604} & \multicolumn{1}{c|}{2.3271e-11} & 167.3528  \\ \hline
fsolve & \multicolumn{1}{c|}{18} & \multicolumn{1}{c|}{40.8353} & \multicolumn{1}{c|}{1.2025e-12} & 167.2938  & \multicolumn{1}{c|}{-}  & \multicolumn{1}{c|}{-}        & \multicolumn{1}{c|}{-}          & -         \\ \hline
\end{tabular}
\end{center}
\end{table}
\begin{example}\label{exa:example2}
Consider the problem defined as in Example \ref{exa:example1}, with the combined harmonic and optical lattice potential \cite{bao2006efficient}
\[V(x,y)=\frac{1}{2}(x^2+y^2)+50[\sin^2(\frac{\pi x}{4})+\sin^2(\frac{\pi y}{4})].\]
\end{example}
Table \ref{tab:compare2} reports the numerical results for NBI, NNI and fsolve. The notation is the same as Table \ref{tab:compare1}.
\begin{table}[H]
\caption{Numerical results for NBI, NNI and fsolve}\label{tab:compare2}
\begin{center}
\begin{tabular}{|c|cccc||cccc|}
\hline
solver & Iter                    & CPU(s)                       & Residual                        & $\lambda$ & Iter                    & CPU(s)                        & Residual                        & $\lambda$ \\ \hline
       & \multicolumn{8}{c|}{$\beta=1,~[a,b]=[34.4188,68.8377]$}                                                                                                                                                                               \\ \hline
       & \multicolumn{4}{c||}{$n=63^2$}                                                                        & \multicolumn{4}{c|}{$n=127^2$}                                                                        \\ \hline
NBI & \multicolumn{1}{c|}{23} & \multicolumn{1}{c|}{19.0777} & \multicolumn{1}{c|}{3.6937e-12} & 36.9082   & \multicolumn{1}{c|}{24} & \multicolumn{1}{c|}{297.5618} & \multicolumn{1}{c|}{1.7121e-11} & 36.9121   \\ \hline
NNI    & \multicolumn{1}{c|}{6}  & \multicolumn{1}{c|}{13.4127} & \multicolumn{1}{c|}{6.2318e-12} & 36.9082   & \multicolumn{1}{c|}{13} & \multicolumn{1}{c|}{576.9285} & \multicolumn{1}{c|}{9.7777e-11} & 36.9121   \\ \hline
fsolve & \multicolumn{1}{c|}{18} & \multicolumn{1}{c|}{43.7313} & \multicolumn{1}{c|}{1.5019e-12} & 36.9082   & \multicolumn{1}{c|}{-}  & \multicolumn{1}{c|}{-}        & \multicolumn{1}{c|}{-}          & -         \\ \hline
       & \multicolumn{8}{c|}{$\beta=50,~[a,b]=[68.8377,137.6753]$}                                                                                                                                                                              \\ \hline
       & \multicolumn{4}{c||}{$n=63^2$}                                                                        & \multicolumn{4}{c|}{$n=127^2$}                                                                        \\ \hline
NBI & \multicolumn{1}{c|}{21} & \multicolumn{1}{c|}{15.3934} & \multicolumn{1}{c|}{4.0629e-12} & 117.4751  & \multicolumn{1}{c|}{17} & \multicolumn{1}{c|}{219.2103} & \multicolumn{1}{c|}{1.8505e-11} & 117.5013  \\ \hline
NNI    & \multicolumn{1}{c|}{6}  & \multicolumn{1}{c|}{10.7667} & \multicolumn{1}{c|}{5.9779e-12} & 117.4751  & \multicolumn{1}{c|}{7}  & \multicolumn{1}{c|}{312.3404} & \multicolumn{1}{c|}{2.6694e-11} & 117.5013  \\ \hline
fsolve & \multicolumn{1}{c|}{18} & \multicolumn{1}{c|}{41.9088} & \multicolumn{1}{c|}{1.2115e-12} & 117.4751  & \multicolumn{1}{c|}{-}  & \multicolumn{1}{c|}{-}        & \multicolumn{1}{c|}{-}          & -         \\ \hline
       & \multicolumn{8}{c|}{$\beta=100,~[a,b]=[137.6753,275.3506]$}                                                                                                                                                                             \\ \hline
       & \multicolumn{4}{c||}{$n=63^2$}                                                                        & \multicolumn{4}{c|}{$n=127^2$}                                                                        \\ \hline
NBI & \multicolumn{1}{c|}{22} & \multicolumn{1}{c|}{16.0668} & \multicolumn{1}{c|}{4.4592e-12} & 184.1856  & \multicolumn{1}{c|}{21} & \multicolumn{1}{c|}{222.7564} & \multicolumn{1}{c|}{1.9445e-11} & 184.2434  \\ \hline
NNI    & \multicolumn{1}{c|}{6}  & \multicolumn{1}{c|}{9.2104}  & \multicolumn{1}{c|}{6.4338e-12} & 184.1856  & \multicolumn{1}{c|}{6}  & \multicolumn{1}{c|}{265.9054} & \multicolumn{1}{c|}{2.6092e-11} & 184.2434  \\ \hline
fsolve & \multicolumn{1}{c|}{18} & \multicolumn{1}{c|}{41.2992} & \multicolumn{1}{c|}{1.5302e-12} & 184.1856  & \multicolumn{1}{c|}{-}  & \multicolumn{1}{c|}{-}        & \multicolumn{1}{c|}{-}          & -         \\ \hline
       & \multicolumn{8}{c|}{$\beta=1000,~[a,b]=[1.1014e3,2.2028e3]$}                                                                                                                                                                            \\ \hline
       & \multicolumn{4}{c||}{$n=63^2$}                                                                        & \multicolumn{4}{c|}{$n=127^2$}                                                                        \\ \hline
NBI & \multicolumn{1}{c|}{19} & \multicolumn{1}{c|}{14.7042} & \multicolumn{1}{c|}{5.6476e-12} & 1.2053e3  & \multicolumn{1}{c|}{19}   & \multicolumn{1}{c|}{213.0611}       & \multicolumn{1}{c|}{2.0743e-11}   & 1.2065e3  \\ \hline
NNI    & \multicolumn{1}{c|}{6}  & \multicolumn{1}{c|}{5.6909}  & \multicolumn{1}{c|}{6.7445e-12} & 1.2053e3  & \multicolumn{1}{c|}{6}   & \multicolumn{1}{c|}{135.6132}        & \multicolumn{1}{c|}{3.1946e-11}   & 1.2065e3  \\ \hline
fsolve & \multicolumn{1}{c|}{19} & \multicolumn{1}{c|}{43.1263} & \multicolumn{1}{c|}{1.4026e-12} & 1.2053e3  & \multicolumn{1}{c|}{-}  & \multicolumn{1}{c|}{-}        & \multicolumn{1}{c|}{-}          & -         \\ \hline
\end{tabular}
\end{center}
\end{table}
\begin{example}\label{exa:example3}
Consider the finite difference approximation of \eqref{equ:NEPV1} with $d=3$ and Dirichlet boundary conditions on $[0,1]\times [0,1]\times [0,1]$. The discretization is as follows,
\begin{equation*}
(N_x+1)(N_y+1)(N_z+1)\beta\mathrm{diag}(u^{[2]})u+Bu=\lambda u,~u^Tu=1,
\end{equation*}
where $u\in\mathbb{R}^n$, $n=N_xN_yN_z$, and $N_x,N_y,N_z$ are numbers of split points along each direction except for endpoints. $B=A+V$ where $A=I_{N_z}\otimes I_{N_y}\otimes L_{h_x}+I_{N_z}\otimes L_{h_y}\otimes I_{N_x}+L_{h_z}\otimes I_{N_y}\otimes I_{N_x}$, where $I_{N}$ is the $N\times N$ matrix and $L_h$ is defined as Example \ref{exa:example1}. $h_x=1/(N_x+1)$ and $h_y$, $h_z$ are defined similarly. $V$ is the discretization of $V(x,y,z)=x^2+y^2+z^2$. See Table \ref{tab:3d} for the numerical results of the three-dimensional example.
\end{example}
\begin{table}[H]
\caption{Numerical results for NBI, NNI and fsolve when $\beta=1$.}\label{tab:3d}
\begin{center}
\begin{tabular}{|c|c|c|c|c||c|c|c|c|}
\hline
solver & Iter  & CPU(s)    & Residual    & $\lambda$ & Iter  & CPU(s)    & Residual    & $\lambda$ \\ \hline
       & \multicolumn{4}{c||}{$N_x=N_y=17$, $N_z=33$} & \multicolumn{4}{c|}{$N_x=17$, $N_y=N_z=33$} \\ \hline
NBI    & 20    & 62.3397   & 6.5372e-13  & 19.7394   & 18    & 223.1240  & 1.1629e-12  & 19.7574   \\ \hline
NNI    & 6     & 25.3299   & 7.0243e-13  & 19.7394   & 6     & 119.1262  & 9.0901e-13  & 19.7574   \\ \hline
fsolve & 16    & 171.6908  & 7.3194e-13  & 19.7394   & -     & -         & -           &  -         \\ \hline
\end{tabular}
\end{center}
\end{table}
\section{Conclusion}
In this paper, we are concentrated with the positive eigenpair of the nonlinear eigenvalue problem with eigenvector nonlinearity (NEPv) generated by the finite difference discretization of the GPE. The Newton-Noda iteration \cite{liu2020positivity} preserving the positivity was transferred to GPE. We observed that it still guarantees the locally quadratic convergence even though the inner linear system is solved inexactly, which may need further research. We proposed the Newton-Bisection method, which is easy to implement. We then gave the computational complexity analysis for the Newton-Bisection method in detail, which gives an intuitive explanation for the phenomena that the convergence is getting faster as the bisection iteration proceeds. Another advantage of Newton-Bisection method is the easy-to-solve subproblems in each bisection iteration. With the nice block tridiagonal structure, we used more effective strategy solving subproblems.


%
 \section*{Conflict of interest}

 The authors declare that they have no conflict of interest.

\bibliographystyle{spmpsci}      
\bibliography{ref}   


\end{document}